\newtheorem{thm}{Theorem}[section]
\newtheorem{lem}[thm]{Lemma}
\theoremstyle{definition}
\theoremstyle{remark}
\numberwithin{equation}{section}
\def\XXint#1#2#3{{\setbox0=\hbox{$#1{#2#3}{\int}$}
  \vcenter{\hbox{$#2#3$}}\kern-.5\wd0}}
\begin{document}

\title[Smooth metric measure spaces]
{$L^2_f$ harmonic 1-forms on smooth metric measure spaces with positive $\lambda_1(\Delta_f)$}

\author{Jiuru Zhou}
\address{School of Mathematical Science \\
Yangzhou University \\
Yangzhou,  Jiangsu 225002, China}
\email{zhoujiuru@yzu.edu.cn}


\subjclass[2010]{53C21, 53C20}

\keywords{$L^2_f$ Harmonic forms; first spectrum of weighted Laplacian; smooth metric measure spaces}

\date{\today}

\begin{abstract}
 In this paper, we study vanishing and splitting results on a complete smooth metric measure space $(M^n,g,\mathrm{e}^{-f}\mathrm{d}v)$ with various negative $m$-Bakry-\'Emery Ricci curvature lower bounds in terms of the first spectrum $\lambda_1(\Delta_f)$ of the weighted Laplacian $\Delta_f$, i.e. $\mathrm{Ric}_{m,n}\geq -a\lambda_1(\Delta_f)-b$ for $0<a\leq\dfrac{m}{m-1}, b\geq0$. In particular, we consider three main cases for different $a$ and $b$ with or without conditions on $\lambda_1(\Delta_f)$. These results are extensions of Dung and Vieira, and weighted generalizations of Li-Wang, Dung-Sung and Vieira.

\end{abstract}

\maketitle


\section{Introduction}
One of the central problem in differential geometry is the relation between the geometry and topology of a manifold. The study of $L^2$-harmonic forms is such kind of problem, see for example \cite{Bue99,Car07,Lott03}, etc., and references there in. Let $M$ be a complete Riemannian manifold and $\lambda_1(M)$ be the first spectrum of the Laplacian on $M$, which can be characterized by
$$
 \lambda_{1}(M)=\inf_{\psi \in C_{0}^{\infty}(M)} \frac{\int_{M}|\nabla \psi|^{2}~\mathrm{d}v}{\int_{M} \psi^{2}~\mathrm{d}v}.
$$

In the work of Li and Wang \cite{LW01}, they proved the following vanishing theorem for $L^2$ harmonic 1-forms on manifolds whose Ricci curvature is bounded below by a negative multiple of the first spectrum,

\begin{thm}[Theorem 4.2, \cite{LW01}]\label{LW01}
 Let $M$ be an $n$-dimensional complete Riemannian manifold with $\lambda_1(M)>0$ and
 $$
  \mathrm{Ric}_{M} \geq-\frac{n}{n-1} \lambda_{1}(M)+\delta,
 $$
 for some $\delta>0$. Then $\mathcal{H}^{1}\left(L^{2}(M)\right)=0$, where $\mathcal{H}^{1}\left(L^{2}(M)\right)$ denotes the space of $L^2$ integrable harmonic 1-forms on $M$.
\end{thm}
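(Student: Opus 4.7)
My plan is to use the classical Bochner--Weitzenbock method combined with the refined Kato inequality for harmonic 1-forms and the variational characterization of $\lambda_1$. Let $\omega$ be an $L^2$ harmonic 1-form and set $u := |\omega|$. Away from the zero set of $\omega$, the Bochner formula gives
\begin{equation*}
 u \Delta u + |\nabla|\omega||^2 = |\nabla \omega|^2 + \mathrm{Ric}(\omega, \omega).
\end{equation*}
Invoking the refined Kato inequality $|\nabla \omega|^2 \geq \tfrac{n}{n-1}|\nabla u|^2$ (valid for harmonic 1-forms) together with the curvature hypothesis, I would deduce the pointwise differential inequality
\begin{equation*}
 u \Delta u \geq \tfrac{1}{n-1}|\nabla u|^2 - \tfrac{n}{n-1}\lambda_1(M)\, u^2 + \delta\, u^2.
\end{equation*}

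Next I would test this against a compactly supported cutoff $\phi^2$, where $\phi$ is a standard Lipschitz cutoff equal to $1$ on $B_p(R)$, vanishing outside $B_p(2R)$, with $|\nabla \phi| \leq C/R$. After integration by parts on the left-hand side, the essential move is to apply the variational characterization of $\lambda_1(M)$ to the test function $\phi u$:
\begin{equation*}
 \lambda_1(M)\int_M \phi^2 u^2\,\mathrm{d}v \leq \int_M |\nabla(\phi u)|^2\,\mathrm{d}v = \int_M \phi^2|\nabla u|^2 + 2\phi u\langle\nabla\phi,\nabla u\rangle + u^2|\nabla\phi|^2\,\mathrm{d}v.
\end{equation*}
The key algebraic step is to eliminate the large $\tfrac{n}{n-1}\lambda_1$ term coming from the curvature bound against the $\lambda_1$ term appearing through this Rayleigh quotient, in such a way that the coefficient in front of the remaining gradient term stays nonnegative. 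This is exactly where the sharp constant $\tfrac{n}{n-1}$ from Kato meshes with the sharp constant $\tfrac{n}{n-1}$ in the curvature hypothesis; I expect the computation to leave an inequality of the form
\begin{equation*}
 \tfrac{(n-1)\delta}{n}\int_M \phi^2 u^2\,\mathrm{d}v \leq C\int_M u^2|\nabla\phi|^2\,\mathrm{d}v + \text{cross terms absorbable by Young's inequality}.
\end{equation*}

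Finally, I would let $R \to \infty$. Since $\omega \in L^2$ and $|\nabla\phi|^2 \leq C/R^2$, the right-hand side tends to $0$, so $\delta \int_M u^2\,\mathrm{d}v \leq 0$. As $\delta > 0$, this forces $u \equiv 0$, i.e. $\omega \equiv 0$, proving $\mathcal{H}^1(L^2(M)) = 0$. The main obstacle is the bookkeeping in the middle step: one must track the Kato constant and the $\lambda_1$ constant so that after combining the two, the coefficient of $\int \phi^2 |\nabla u|^2$ retains the correct sign, and the mixed term $\int \phi u \langle\nabla\phi,\nabla u\rangle$ gets absorbed. A secondary subtlety is that refined Kato fails at the zero set of $\omega$; the standard remedy is to work with $\sqrt{u^2 + \eps}$ and let $\eps \to 0$, but since we only need integral estimates this is routine.
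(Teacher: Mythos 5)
Your proposal is correct and follows essentially the same route the paper takes: the Bochner formula plus the refined Kato inequality (the $f\equiv\const$, $m=n$ case of the paper's Lemma 2.1) to get $u\Delta u\geq\frac{1}{n-1}|\nabla u|^2-\bigl(\frac{n}{n-1}\lambda_1-\delta\bigr)u^2$, then the cutoff/integration-by-parts argument combined with the Rayleigh quotient for $\phi u$, exactly as in the paper's proofs of Theorems 1.5 and 3.1. The cancellation of the two sharp $\tfrac{n}{n-1}$ constants and the absorption of the cross term (using the finiteness of $\int_M|\nabla u|^2$, the paper's Lemmas 2.2--2.3) work out as you anticipate, leaving $\delta\int_M u^2\leq 0$.
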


After that, Lam \cite{Lam10} generalizes Li-Wang's theorem to manifolds with a weighted Poincar\'e inequality with growth assumption on the weight function. Later, Dung \cite{Du12} asked what is the geometric structure of $M$ if $\delta=0$? Dung and Sung \cite{DS14} get the following theorem,
\begin{thm}[Theorem 2.2, \cite{DS14}]\label{DS14}
 Let $M$ be a complete Riemannian manifold of dimension $n\geq 3$. Suppose that $\lambda_1(M)>0$ and
 $$
  \mathrm{Ric}_{M} \geq-\frac{n}{n-1} \lambda_1(M).
 $$
 Then, either

 $(1)$ $\mathcal{H}^{1}\left(L^{2}(M)\right)=0$; or

 $(2)$ $\widetilde{M}=\mathbb{R} \times N$, where $\widetilde{M}$ is the universal cover of $M$ and $N$ is a manifold of dimension $(n-1)$.
\end{thm}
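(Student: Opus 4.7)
My plan is to push the Li--Wang Bochner-plus-refined-Kato argument to its boundary and then carefully chase the equality case, in which the gap $\delta$ of Theorem~\ref{LW01} has been taken to be $0$. For any $\omega \in \mathcal{H}^{1}(L^{2}(M))$, the Bochner formula combined with the refined Kato inequality $|\nabla\omega|^{2} \geq \frac{n}{n-1}|\nabla|\omega||^{2}$ for harmonic $1$-forms yields, on $\{|\omega|>0\}$,
\begin{equation*}
 |\omega|\Delta|\omega| \;\ge\; \tfrac{1}{n-1}\bigl|\nabla|\omega|\bigr|^{2} + \Ric(\omega,\omega) \;\ge\; \tfrac{1}{n-1}\bigl|\nabla|\omega|\bigr|^{2} - \tfrac{n}{n-1}\lambda_{1}(M)\,|\omega|^{2}.
\end{equation*}
I would then test the Rayleigh quotient $\lambda_{1}(M)\int\phi^{2} \le \int|\nabla\phi|^{2}$ against $\phi = \eta|\omega|$ for a compactly supported Lipschitz cutoff $\eta$, multiply the above inequality by $\eta^{2}$, integrate by parts, and combine the two to obtain, after absorbing the cross term $\int \eta|\omega|\,\nabla\eta\cdot\nabla|\omega|$ via Cauchy--Schwarz, a Caccioppoli-type estimate of the schematic form $\int_{M}\eta^{2}|\nabla|\omega||^{2}\le C\int_{M}|\nabla\eta|^{2}|\omega|^{2}$. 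Picking a standard exhausting family $\eta_{R}$ and using $\omega \in L^{2}$, one lets $R\to\infty$ to conclude $|\nabla|\omega||\equiv 0$, so $|\omega|$ is constant; the $L^{2}$-finiteness then forces either this constant to be $0$, giving conclusion (1), or else all the inequalities above must be equalities.

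In the saturation regime I would harvest three pieces of rigidity: (a) equality in the refined Kato inequality, which pins $\nabla\omega$ down algebraically in terms of $\nabla|\omega|$ and $\omega$; (b) equality in the Ricci bound along $\omega$; and (c) $|\omega|$ being, up to scale, a positive first eigenfunction of $\Delta$ once lifted to the universal cover $\widetilde{M}$. On $\widetilde{M}$ the harmonic form $\tilde\omega$ is exact, so write $\tilde\omega=du$. Rigidity (a) then translates into $|\nabla u|$ being a function of $u$ alone; in particular $u$ has no critical points, and its normalized gradient flow defines a free isometric $\mathbb{R}$-action on $\widetilde{M}$ whose orbits are unit-speed geodesics orthogonal to the level sets. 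A direct application of the de Rham decomposition theorem then delivers $\widetilde{M}=\mathbb{R}\times N$, with $N$ a level hypersurface of $u$.

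The principal obstacle is converting the pointwise equality data into this metric splitting in a fully rigorous way. One must first rule out zeros of $\omega$ on $\widetilde{M}$ using a strong maximum principle / unique continuation argument applied to the elliptic equation that $|\omega|$ satisfies in the equality case, so that $\omega^{\perp}$ is a globally smooth, integrable, totally geodesic distribution. One must also verify that the lifted $|\omega|$ really realises $\lambda_{1}(\widetilde{M})$ rather than merely $\lambda_{1}(M)$, invoking $\pi_{1}$-equivariance together with the $L^{2}$-control. Once these analytic points are in place the de Rham step is automatic; the hypothesis $n\ge 3$ enters only to guarantee the positivity/nontriviality needed in the refined Kato constant and in the eigenfunction argument on the cover.
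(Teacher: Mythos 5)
Your overall strategy (weighted Bochner formula with the refined Kato constant, integral estimates against cutoffs, then equality-case rigidity feeding a de Rham splitting) is the same as the one the paper follows for its weighted analogue, Theorem~\ref{main2}, and as Dung--Sung's original argument. However, the middle step of your proposal contains a genuine error that breaks the logic of the dichotomy. At the threshold value $\mathrm{Ric}\ge -\frac{n}{n-1}\lambda_1(M)$ the Caccioppoli estimate you claim does not exist: writing $h=|\omega|$, multiplying $h\Delta h\ge \frac{1}{n-1}|\nabla h|^2-\frac{n}{n-1}\lambda_1 h^2$ by $\eta^2$, integrating by parts, and inserting the Rayleigh quotient for $\eta h$, the coefficient of $\int\eta^2|\nabla h|^2$ on the two sides is $\frac{n}{n-1}$ in both cases and cancels \emph{exactly}; what survives is only $0\le \frac{n}{n-1}\int h^2|\nabla\eta|^2+\frac{2}{n-1}\int\eta h\,\nabla\eta\cdot\nabla h$, which is vacuous as $R\to\infty$. (This exact cancellation is precisely why Li--Wang need the gap $\delta>0$ in Theorem~\ref{LW01} to conclude vanishing.) Consequently you cannot deduce $|\nabla|\omega||\equiv 0$; and indeed the conclusion ``$|\omega|$ is constant'' is false in the splitting alternative, where $|\omega|$ is a nonconstant function of the $\mathbb{R}$-variable. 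Your sentence ``\dots forces either this constant to be $0$ \dots or else all the inequalities above must be equalities'' is therefore a non sequitur: had your Caccioppoli estimate been valid, it would always give conclusion (1) and the splitting case could never occur.

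The correct hinge, which your proposal is missing, is a \emph{reverse Poincar\'e inequality} rather than a Caccioppoli estimate. From the differential inequality $h\Delta h\ge -a h^2+b|\nabla h|^2$ with $a=\frac{n}{n-1}\lambda_1(M)$ and $b=\frac{1}{n-1}$, the cutoff argument (Lemma~\ref{Di} in the weighted setting, with the $\varepsilon$-splitting of the cross term) yields $\int_M|\nabla h|^2\le \frac{a}{b+1}\int_M h^2=\lambda_1(M)\int_M h^2$, where it is essential that the Kato improvement $b=\frac{1}{n-1}$ sits in the denominator $b+1=\frac{n}{n-1}$ and exactly offsets the curvature coefficient. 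Combined with the Poincar\'e inequality $\lambda_1(M)\int_M h^2\le\int_M|\nabla h|^2$ (justified for $h\in L^2$ with finite Dirichlet integral via Lemma~\ref{convergence}), this forces $\int_M|\nabla h|^2=\lambda_1(M)\int_M h^2$ whenever $h\not\equiv 0$, hence equality throughout the chain, and in particular pointwise equality in the refined Kato inequality and in the Ricci bound along $\omega$. Only at that point does your rigidity analysis (the explicit form of $\nabla\omega$, absence of zeros, and the de Rham splitting of $\widetilde{M}$) take over; that part of your outline is consistent with the argument of Li--Wang \cite{LW06} and Dung--Sung \cite{DS14} that the paper invokes.
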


More generally, Dung \cite{Du12} considered the smooth metric measure space $(M,g,\mathrm{e}^{-f}\mathrm{d}v)$, and denote by $\lambda_1(\Delta_f)$ the first spectrum of the $f$-Laplacian on $M$, which can be similarly characterized by
$$
 \lambda_{1}(\Delta_f)=\inf_{\psi \in C_{0}^{\infty}(M)} \frac{\int_{M}|\nabla \psi|^{2}\cdot\mathrm{e}^{-f}\mathrm{d}v}{\int_{M} \psi^{2}\cdot\mathrm{e}^{-f}\mathrm{d}v},
$$
Then Dung \cite{Du12} proved the following result which is concerned with vanishing for space of $L^2_f$ harmonic functions and splitting of $M$,
\begin{thm}[Theorem 1.3, \cite{Du12}]\label{Du12}
 Let $(M,g,\mathrm{e}^{-f}\mathrm{d}v)$ be a complete non-compact smooth metric measure space of dimension $n\geq 3$ with positive spectrum $\lambda_1(\Delta_f)>0$. Assume that
 $$
  \mathrm{Ric}_{m,n}\geq -\frac{m}{m-1}\lambda_1(\Delta_f).
 $$
 Then either

 $1$. $\mathcal{H}(L_f^2(M))=\mathbb{R}$, where $\mathcal{H}(L_f^2(M))$ is the space of $f$-harmonic functions with finite $f$-energy; or

 $2$. $M=\mathbb{R}\times N$ with the warped product metric
 $$
  \mathrm{d}s_M^2=\mathrm{d}t^2+\eta^2(t)\mathrm{d}s_N^2,
 $$
 where $\eta(t)$ is a positive function and $N$ is an $(n-1)$-dimensional manifold.
\end{thm}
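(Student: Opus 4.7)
The plan is to adapt the Li--Wang argument to the weighted setting, combining the weighted Bochner formula, a refined Kato inequality tailored to the $m$-Bakry--\'Emery tensor, and the variational characterization of $\lambda_1(\Delta_f)$.

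Assume $\mathcal{H}(L_f^2(M))$ strictly contains the constants, fix a non-constant $u \in \mathcal{H}(L_f^2(M))$, and set $h = |\nabla u|$. From $\Delta_f u = 0$ one has $\Delta u = \langle \nabla f, \nabla u\rangle$, so substituting $\mathrm{Ric}_f = \mathrm{Ric}_{m,n} + df \otimes df/(m-n)$ into the weighted Bochner identity gives
\begin{equation*}
\tfrac{1}{2}\Delta_f h^2 = |\nabla^2 u|^2 + \tfrac{(\Delta u)^2}{m-n} + \mathrm{Ric}_{m,n}(\nabla u, \nabla u).
\end{equation*}
A pointwise Cauchy--Schwarz computation on $\nabla^2 u$ in a local orthonormal frame with $e_1 = \nabla u/h$ yields the weighted refined Kato inequality $|\nabla^2 u|^2 + (\Delta u)^2/(m-n) \geq \tfrac{m}{m-1}|\nabla h|^2$. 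Combined with the hypothesis $\mathrm{Ric}_{m,n} \geq -\tfrac{m}{m-1}\lambda_1(\Delta_f)$ this produces the key pointwise inequality
\begin{equation*}
h\,\Delta_f h \geq \tfrac{1}{m-1}|\nabla h|^2 - \tfrac{m}{m-1}\lambda_1(\Delta_f)\,h^2. \quad (\star)
\end{equation*}

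Next, I would multiply $(\star)$ by a Lipschitz cutoff $\phi_R^2$ supported in the geodesic ball $B(2R)$, equal to $1$ on $B(R)$ with $|\nabla \phi_R| \leq C/R$, integrate against $e^{-f}dv$, integrate by parts, and pair the resulting estimate with the Rayleigh quotient bound $\lambda_1(\Delta_f)\int (\phi_R h)^2 e^{-f} dv \leq \int |\nabla(\phi_R h)|^2 e^{-f} dv$. Since $\int h^2 e^{-f}\,dv = \int |\nabla u|^2 e^{-f}\,dv < \infty$ by the finite $f$-energy hypothesis, the cross and boundary error terms vanish as $R \to \infty$ by Cauchy--Schwarz. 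The Bochner-derived lower bound and the Poincar\'e upper bound then collapse to the same limit, forcing equality in $(\star)$, and therefore in the weighted refined Kato inequality, almost everywhere.

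Reading off this equality in the frame $(e_1, \ldots, e_n)$ on $\{h>0\}$ gives the rigid Hessian $u_{1i}=0$ for $i\geq 2$, $u_{ij}=0$ for distinct $i,j\geq 2$, $u_{22}=\cdots=u_{nn}=-\Delta u/(m-n)$, and $u_{11}=(m-1)\Delta u/(m-n)$. In particular, $h$ is constant along the level sets of $u$, $\nabla h$ is proportional to $\nabla u$, and the level hypersurfaces are totally umbilic with mean curvature depending only on the level. A Cheeger--Gromoll style argument in the warped product category (as in Li--Wang and Dung--Sung, suitably adapted to the weighted setting) then parametrizes $M$ by the flow of the unit gradient $\nabla u/h$, yielding $M = \mathbb{R} \times N$ with warped product metric $dt^2 + \eta^2(t)\,ds_N^2$, where $\eta$ is recovered by integrating the ODE encoded by $u_{11}$. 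The main obstacle is globalizing the pointwise Hessian rigidity into this splitting: one must exclude zeros of $\nabla u$ so that $\nabla u/h$ extends smoothly to all of $M$, propagate the umbilicity along every integral curve of the gradient, and verify that every level hypersurface is isometric to a single model $(N, ds_N^2)$ after the conformal warping. This global synthesis, together with the associated ODE analysis for $\eta(t)$, is where the bulk of the geometric work lies.
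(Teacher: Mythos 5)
Your outline is correct and follows essentially the same strategy the paper itself uses for the analogous 1-form result (Theorem \ref{main2}): the weighted Bochner formula plus the $m$-Bakry--\'Emery refined Kato inequality obtained from $(a+b)^2\geq \frac{a^2}{1+\alpha}-\frac{b^2}{\alpha}$ with $\alpha=\frac{m-n}{n-1}$, cut-off integration played against the Rayleigh quotient for $\lambda_1(\Delta_f)$, and the forced pointwise equality yielding the rigid Hessian $\mathrm{diag}(-(m-1)\mu,\mu,\dots,\mu)$. The splitting step you defer to the Li--Wang/Dung--Sung argument is exactly what the paper (and the cited source \cite{Du12}) does as well, so the proposal matches the intended proof.
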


For space of $L_f^2$ harmonic 1-forms, Vieira proved in \cite{Vie13},
\begin{thm}[Theorem 1.1, \cite{Vie13}]\label{Vie1}
 Let $(M,g,\mathrm{e}^{-f}\mathrm{d}v)$ be a complete non-compact smooth metric measure space with non-negative $\infty$-Bakry-\'Emery Ricci curvature. If the space of $L_f^2$ harmonic 1-forms is non-trivial, then the weighted volume of $M^n$ is finite, that is
 $$
 \mathrm{vol}_f(M^n)=\int_{M^n} \mathrm{e}^{-f}\mathrm{d}v<\infty,
 $$
 and the universal covering splits isometrically as $\widetilde{M}^{n}=\mathbb{R} \times N^{n-1}$.
\end{thm}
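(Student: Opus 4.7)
My plan is to combine the weighted Bochner--Weitzenb\"ock identity for 1-forms with a Caccioppoli cutoff argument, and then to extract the splitting from the resulting parallel form via the de Rham decomposition on the universal cover. For a nontrivial $L_f^2$ harmonic 1-form $\omega$ (so $d\omega=0$ and $\delta_f\omega=0$), the weighted Weitzenb\"ock identity reads
\[
\tfrac{1}{2}\Delta_f|\omega|^2=|\nabla\omega|^2+(\Ric+\nabla^2 f)(\omega,\omega).
\]
With $\Ric+\nabla^2 f\geq 0$ and the basic Kato inequality $|\nabla\omega|^2\geq|\nabla|\omega||^2$, this gives $|\omega|\,\Delta_f|\omega|\geq 0$, so $|\omega|$ is weakly $f$-subharmonic on $M$.

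Next, I would test the pointwise inequality against $\phi^2$, where $\phi$ is a standard cutoff equal to $1$ on $B_r(p)$, supported in $B_{2r}(p)$, with $|\nabla\phi|\leq 2/r$. Integration by parts against the weighted measure $\mathrm{e}^{-f}\mathrm{d}v$ together with Cauchy--Schwarz produces a Caccioppoli-type bound of the form
\[
\int_{B_r(p)}|\nabla|\omega||^2\,\mathrm{e}^{-f}\mathrm{d}v \leq \frac{C}{r^2}\int_{B_{2r}(p)}|\omega|^2\,\mathrm{e}^{-f}\mathrm{d}v.
\]
Letting $r\to\infty$ and using $\omega\in L_f^2$ forces $|\omega|$ to be constant on $M$. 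The assumed nontriviality of the space of $L_f^2$ harmonic 1-forms makes this constant strictly positive, and then integrability of $|\omega|^2$ against $\mathrm{e}^{-f}\mathrm{d}v$ immediately gives $\vol_f(M)<\infty$.

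Finally, since $|\omega|$ is constant and $\Ric+\nabla^2 f\geq 0$, every inequality used above must be saturated: $|\nabla\omega|^2=|\nabla|\omega||^2=0$ and $(\Ric+\nabla^2 f)(\omega,\omega)=0$, so $\omega$ is parallel. Lifting to the universal cover $\widetilde{M}$, the pullback $\widetilde{\omega}$ is still parallel, closed, and nowhere vanishing; simple connectedness yields a primitive $h$ with $dh=\widetilde{\omega}$, whose gradient $\nabla h$ is a parallel vector field of constant length, and the classical parallel-vector-field/de Rham splitting argument produces the isometric decomposition $\widetilde{M}=\mathbb{R}\times N^{n-1}$. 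The step I expect to require the most care is the Liouville argument above, since $|\omega|$ may fail to be smooth on its zero set; this is handled by the standard regularization $u_{\eps}=\sqrt{|\omega|^2+\eps^2}$ with $\eps\to 0$, which is routine bookkeeping but must not be glossed over.
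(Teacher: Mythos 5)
This statement is quoted from Vieira \cite{Vie13} and the paper gives no proof of it, so there is nothing internal to compare against; judged on its own, your argument is correct and is essentially Vieira's original one (weighted Weitzenb\"ock identity plus Kato, a cutoff/Caccioppoli argument forcing $|\omega|$ to be a positive constant, hence $\mathrm{vol}_f(M)<\infty$ and $\nabla\omega=0$, then the de Rham splitting of $\widetilde{M}$ from the parallel nowhere-zero $1$-form). The only refinement worth making is to run the Caccioppoli estimate on the smooth function $|\omega|^2$ (for which $\Delta_f|\omega|^2\geq 2|\nabla\omega|^2$) rather than on $\sqrt{|\omega|^2+\varepsilon^2}$: this sidesteps both the non-smoothness of $|\omega|$ and the fact that $\sqrt{|\omega|^2+\varepsilon^2}$ need not lie in $L^2_f$ when $\mathrm{vol}_f(M)=\infty$, and it yields $\nabla\omega\equiv 0$ directly.
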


As a corollary, Vieira obtained (Corollary 1.2, \cite{Vie13}) with the same curvature assumption, if the first eigenvalue of the $f$-Laplacian is positive, then the space of $L_f^2$ harmonic 1-forms is trivial.

Inspired by Li-Wang, Vieira, Dung and Dung-Sung's work, in this paper, we extend Vieira's Theorem \ref{Vie1} by relaxing the curvature condition to be $\mathrm{Ric}_{m,n} \geq-a \lambda_1(\Delta_f)$ without positivity restriction on $\lambda_1(\Delta_f)$, and generalize Dung-Sung's Theorem \ref{DS14} to complete non-compact smooth metric measure spaces, which can also be considered as an extension of Dung's Theorem \ref{Du12}. More precisely, we get
\begin{thm}\label{main1}
 Let $(M,g,\mathrm{e}^{-f}\mathrm{d}v)$ be a complete non-compact smooth metric measure space of dimension $n\geq 3$. If the $m$-Bakry-\'{E}mery Ricci curvature satisfies
 $$
  \mathrm{Ric}_{m,n}(x) \geq-a \lambda_1(\Delta_f),
 $$
 where $0< a < \dfrac{m}{m-1}$, then the space of $L_f^2$ harmonic 1-forms is trivial.
\end{thm}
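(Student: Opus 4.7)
Let $\omega$ be an $L_f^2$ harmonic $1$-form and set $u:=|\omega|$; it suffices to prove $u\equiv 0$. The plan is to derive a sharp pointwise Bochner--Kato inequality for $u$, integrate it against a compactly supported cutoff, trade the resulting $\lambda_1(\Delta_f)$ term against a gradient term via the variational characterization of $\lambda_1(\Delta_f)$, and finally exhaust $M$ by geodesic balls.

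The target pointwise inequality is
\begin{equation*}
u\,\Delta_f u \;\geq\; \tfrac{1}{m-1}|\nabla u|^2 + \mathrm{Ric}_{m,n}(\omega^\sharp,\omega^\sharp).
\end{equation*}
I would obtain it by combining the weighted Weitzenb\"ock identity $\tfrac12\Delta_f|\omega|^2 = |\nabla\omega|^2 + \mathrm{Ric}_f(\omega^\sharp,\omega^\sharp)$ with the $m$-Bakry--\'Emery refined Kato inequality
\begin{equation*}
|\nabla\omega|^2 \;\geq\; \tfrac{m}{m-1}|\nabla u|^2 - \tfrac{1}{m-n}\langle\nabla f,\omega^\sharp\rangle^2
\end{equation*}
and the algebraic identity $\mathrm{Ric}_f = \mathrm{Ric}_{m,n} + (m-n)^{-1}df\otimes df$: the $(m-n)^{-1}\langle\nabla f,\omega^\sharp\rangle^2$ produced by converting $\mathrm{Ric}_f$ to $\mathrm{Ric}_{m,n}$ exactly cancels the weighted error term on the Kato side, which is precisely what promotes the unweighted constant $\tfrac{n}{n-1}$ to $\tfrac{m}{m-1}$ and determines the threshold in the hypothesis. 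Using the curvature bound, the pointwise inequality becomes $u\,\Delta_f u \geq \tfrac{1}{m-1}|\nabla u|^2 - a\lambda_1(\Delta_f)u^2$.

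Given a Lipschitz cutoff $\phi$ with compact support, multiplying by $\phi^2$, integrating against $\mathrm{e}^{-f}\mathrm{d}v$ and integrating by parts yields
\begin{equation*}
\tfrac{m}{m-1}\int\phi^2|\nabla u|^2\mathrm{e}^{-f}\mathrm{d}v + 2\int\phi u\langle\nabla\phi,\nabla u\rangle\mathrm{e}^{-f}\mathrm{d}v \;\leq\; a\lambda_1(\Delta_f)\int\phi^2 u^2\mathrm{e}^{-f}\mathrm{d}v.
\end{equation*}
Feeding $\phi u$ into the variational characterization of $\lambda_1(\Delta_f)$ controls the right-hand integrand by $|\nabla(\phi u)|^2$; substituting and collecting gives
\begin{equation*}
\left(\tfrac{m}{m-1}-a\right)\int\phi^2|\nabla u|^2\mathrm{e}^{-f}\mathrm{d}v + 2(1-a)\int\phi u\langle\nabla\phi,\nabla u\rangle\mathrm{e}^{-f}\mathrm{d}v \;\leq\; a\int u^2|\nabla\phi|^2\mathrm{e}^{-f}\mathrm{d}v.
\end{equation*}
A Cauchy--Schwarz estimate on the cross term with a small parameter $\eps>0$ then produces a Caccioppoli-type inequality
\begin{equation*}
\left(\tfrac{m}{m-1}-a-|1-a|\eps\right)\int\phi^2|\nabla u|^2\mathrm{e}^{-f}\mathrm{d}v \;\leq\; \left(a+\tfrac{|1-a|}{\eps}\right)\int u^2|\nabla\phi|^2\mathrm{e}^{-f}\mathrm{d}v,
\end{equation*}
and the strict inequality $a<\tfrac{m}{m-1}$ leaves room to fix $\eps$ so small that the coefficient on the left is strictly positive.

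Taking $\phi$ to be a standard distance-function cutoff supported in $B(2R)$ with $\phi\equiv 1$ on $B(R)$ and $|\nabla\phi|\leq C/R$, the right-hand side is dominated by $C R^{-2}\|\omega\|_{L_f^2}^2$ and tends to $0$ as $R\to\infty$, forcing $\nabla u\equiv 0$. Hence $u$ is constant on $M$; combined with $u\in L_f^2$ (and, in the finite weighted-volume case, a further application of the Poincar\'e inequality with the same cutoff family) this forces $u\equiv 0$ and therefore $\omega\equiv 0$. The crux of the proof is the first step: establishing the refined Kato inequality with the precise constant $\tfrac{m}{m-1}$ and the precisely matched error term $-(m-n)^{-1}\langle\nabla f,\omega^\sharp\rangle^2$, since any weaker version would fail to realize the threshold $a<\tfrac{m}{m-1}$ in the hypothesis.
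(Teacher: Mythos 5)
Your refined Kato/Bochner inequality, the cutoff integration, and the use of the variational characterization of $\lambda_1(\Delta_f)$ all match the paper's argument, and your Caccioppoli estimate correctly forces $\nabla u\equiv 0$, i.e.\ $u=|\omega|$ is constant. The problem is the endgame. If $u$ is a nonzero constant, then $u\in L_f^2$ gives $\vol_f(M)<\infty$, and you propose to finish with ``a further application of the Poincar\'e inequality with the same cutoff family.'' That inequality reads
\begin{equation*}
\lambda_1(\Delta_f)\int_M(\phi u)^2\,\mathrm{e}^{-f}\mathrm{d}v\;\leq\;\int_M|\nabla(\phi u)|^2\,\mathrm{e}^{-f}\mathrm{d}v\;=\;u^2\int_M|\nabla\phi|^2\,\mathrm{e}^{-f}\mathrm{d}v\;\leq\;\frac{C}{R^2}\,u^2\,\vol_f(M)\longrightarrow 0,
\end{equation*}
which yields $u\equiv 0$ only when $\lambda_1(\Delta_f)>0$. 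But the theorem deliberately makes no positivity assumption on $\lambda_1(\Delta_f)$ --- that is precisely what distinguishes it from Vieira's result --- so the case $\lambda_1(\Delta_f)=0$ is a genuine, substantive case (the hypothesis then reads $\mathrm{Ric}_{m,n}\geq 0$), and there the Poincar\'e inequality is vacuous and cannot rule out a nonzero constant $u$ of finite weighted $L^2$ norm.

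The missing ingredient is a weighted Calabi--Yau volume growth theorem: the paper invokes Theorem 9.1 of X.-D. Li (\cite{Li05}) stating that a complete non-compact smooth metric measure space with $\mathrm{Ric}_{m,n}\geq 0$ has infinite weighted volume. The correct closing argument is therefore a dichotomy: if $\lambda_1(\Delta_f)>0$, your Poincar\'e step (or the same estimate applied to $\phi$ alone) contradicts $\vol_f(M)<\infty$; if $\lambda_1(\Delta_f)=0$, the curvature hypothesis becomes $\mathrm{Ric}_{m,n}\geq 0$ and Li's theorem contradicts $\vol_f(M)<\infty$ directly. Without citing (or reproving) that volume growth result, your proof does not close, so as written there is a real gap rather than a stylistic difference. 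Everything before that point is sound and essentially identical in strategy to the paper.
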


\begin{thm}\label{main2}
 Let $(M,g,\mathrm{e}^{-f}\mathrm{d}v)$ be a complete non-compact smooth metric measure space of dimension $n\geq 3$ with positive first eigenvalue $\lambda_1(\Delta_f)$. If the $m$-Bakry-\'{E}mery Ricci curvature satisfies
 $$
  \mathrm{Ric}_{m,n} \geq-\frac{m}{m-1} \lambda_1(\Delta_f),
 $$
 then either

 $(1)$. $\mathcal{H}^{1}(L_f^{2}(M))=\{0\}$; or

 $(2)$. $\widetilde{M}=\mathbb{R} \times N$, where $\widetilde{M}$ is the universal cover of $M$ and $N$ is a manifold of dimension $(n-1)$.
\end{thm}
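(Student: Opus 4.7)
I would adapt the Bochner--Kato scheme of Li--Wang (Theorem \ref{LW01}) and Dung--Sung (Theorem \ref{DS14}) to the $m$-Bakry--\'Emery setting, with careful bookkeeping of the auxiliary terms coming from $\mathrm{Ric}_\infty - \mathrm{Ric}_{m,n} = \tfrac{1}{m-n}\,df\otimes df$. Suppose $\omega$ is a nontrivial element of $\mathcal{H}^1(L_f^2(M))$; set $u=|\omega|$ and work on the open set $\set{u>0}$. The weighted Bochner identity for $\Delta_f\omega=0$ gives
$$
\frac12 \Delta_f u^2 \;=\; |\nabla\omega|^2 + \mathrm{Ric}_{m,n}(\omega,\omega) + \frac{1}{m-n}\langle\nabla f,\omega\rangle^2,
$$
and the refined Kato inequality in its $m$-weighted form, as employed in the proofs of Theorems \ref{Du12} and \ref{Vie1}, supplies the dual estimate
$$
|\nabla\omega|^2 + \frac{1}{m-n}\langle\nabla f,\omega\rangle^2 \;\geq\; \frac{m}{m-1}|\nabla u|^2.
$$
Combining the two with $\mathrm{Ric}_{m,n}\geq -\tfrac{m}{m-1}\lambda_1(\Delta_f)\,g$ produces the pointwise differential inequality
$$
u\,\Delta_f u \;\geq\; \frac{1}{m-1}|\nabla u|^2 \;-\; \frac{m}{m-1}\lambda_1(\Delta_f)\,u^2.
$$

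\textbf{Cutoff integration.} For a smooth compactly supported cutoff $\phi$, I would multiply the last display by $\phi^2$, integrate against $e^{-f}\,dv$, integrate by parts the term $\int \phi^2 u\Delta_f u\,e^{-f}\,dv$, and then test the variational characterization $\lambda_1(\Delta_f)\int(\phi u)^2 e^{-f}\,dv \leq \int|\nabla(\phi u)|^2 e^{-f}\,dv$ against $\phi u$. After rearranging and absorbing mixed terms via Cauchy--Schwarz, the $\lambda_1 u^2$ contributions cancel and one is left with a Caccioppoli-type bound whose right-hand side depends only on $\int |\nabla\phi|^2 u^2 e^{-f}\,dv$. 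Choosing $\phi=\phi_R$ a standard logarithmic or annular cutoff supported in $B_{2R}(p)$ and using the assumption $\omega\in L_f^2$, sending $R\to\infty$ forces either $\nabla u\equiv 0$ on $M$ or every intermediate inequality in the derivation to be an equality.

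\textbf{Rigidity and splitting.} If $\nabla u\equiv 0$, then $u$ is a positive constant on the set $\set{u>0}$ (which must then be all of $M$), saturation of the refined Kato equality forces $\omega$ itself to be parallel, and the unit vector field $\omega^\sharp/u$ is a parallel vector field; $\widetilde M=\mathbb R\times N$ then follows from the de Rham decomposition. In the remaining equality case, simultaneous saturation of the weighted refined Kato inequality and of the Rayleigh quotient for $\lambda_1(\Delta_f)$ rigidifies $\nabla\omega$ into a one-dimensional structure along $\omega/u$; lifting to the universal cover $\widetilde M$, where the closed $1$-form $\omega$ becomes exact, $\omega=dh$, we have $\nabla\omega=\nabla^2 h$, and the structural equation forces $\mathrm{grad}\,h/|\mathrm{grad}\,h|$ to be parallel, again giving $\widetilde M=\mathbb R\times N$ by de Rham.

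\textbf{Main obstacle.} The hardest step is the rigidity analysis: verifying that simultaneous saturation of the weighted refined Kato inequality and of the Rayleigh quotient really does produce a parallel rank-one distribution on $\widetilde M$, rather than only a warped-product splitting of the kind appearing in Theorem \ref{Du12}. The $\tfrac{1}{m-n}\,df\otimes df$ correction must be tracked carefully through the equality identities so that the privileged direction $\omega/|\omega|$ is parallel with respect to the Levi-Civita connection, not merely a weighted one; by contrast, the cutoff integration step is essentially routine once the pointwise inequality is in hand.
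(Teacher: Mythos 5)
Your proposal follows essentially the same route as the paper: the ``$m$-weighted refined Kato inequality'' you invoke is exactly the content of the paper's Lemma 2.1 (equivalent to $|\omega|\Delta_f|\omega|\ge \tfrac{1}{m-1}|\nabla|\omega||^2+\mathrm{Ric}_{m,n}(\omega,\omega)$, proved there via Li's elementary inequality), and your cutoff/Rayleigh-quotient argument forcing equality, followed by rigidity of $\nabla\omega$ and the de Rham splitting, is precisely the paper's proof, which likewise defers the final splitting step to Li--Wang and Dung--Sung. One small correction: under $\lambda_1(\Delta_f)>0$ the branch $\nabla u\equiv 0$ does not produce a parallel form (nothing forces Kato to be saturated there) but simply a contradiction, since $\lambda_1(\Delta_f)\int_M u^2\,\mathrm{e}^{-f}\mathrm{d}v\le \int_M|\nabla u|^2\,\mathrm{e}^{-f}\mathrm{d}v=0$ gives $u\equiv 0$; the only surviving alternative is full saturation of all the inequalities, which is where the splitting comes from.
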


Finally, if $\lambda_1(\Delta_f)$ has some positive lower bound, then conditions on $\mathrm{Ric}_{m,n}$ can be further relaxed (Theorem \ref{m3}), which is in the spirit of Theorem 6 and 7 in \cite{Vie16}.

\section{Preliminaries}
A smooth metric measure space $(M,g,\mathrm{e}^{-f}\mathrm{d}v)$ is a smooth Riemannian manifold $(M,g)$ together with a smooth function $f$ and a measure $\mathrm{e}^{-f}\mathrm{d}v$. For any constant $m\geq n=\dim M$, we have the $m$-dimensional Bakry-\'{E}mery Ricci curvature
$$
 \mathrm{Ric}_{m,n}=\mathrm{Ric}+\nabla^2 f-\frac{\nabla f\otimes \nabla f}{m-n},
$$
which is just the Ricci tensor if $m=n$, and usually we denote by $\displaystyle{\mathrm{Ric}_{f}}=\mathrm{Ric}+\nabla^2 f$ the $\infty$-dimensional Bakry-\'{E}mery Ricci curvature. Hence, in the following proof, we are actually dealing with $m>n$. Analog to $L^2$ differential forms, a differential form $\omega$ is called an $L^2_f$ differential form if
$$
 \int_M |\omega|^2\cdot\mathrm{e}^{-f}\mathrm{d}v<\infty.
$$
By \cite{Bue99}, the formal adjoint of the exterior derivative $\mathrm{d}$ with respect to the $L^2_{f}$ inner product is
$$
 \delta_f=\delta+\iota_{\nabla f}.
$$
Then the $f$-Hodge Laplacian operator is defined as
$$
 \Delta_f=-(\mathrm{d}\delta_f+\delta_f\mathrm{d}).
$$

Since the first spectrum of the weighted Laplacian $\Delta_f$ is given by
$$\lambda_1(\Delta_f)=\inf _{\psi \in C_{0}^{\infty}(M)} \frac{\int_{M}|\nabla \psi|^{2} \cdot\mathrm{e}^{-f}\mathrm{d}v}{\int_{M} \psi^{2} \cdot\mathrm{e}^{-f}\mathrm{d}v},$$
by variational principle, we have the following Poincar\'e type inequality,
$$
 \lambda_1(\Delta_f)\int_{M} \psi^{2}\cdot\mathrm{e}^{-f}\mathrm{d}v \leq \int_{M}|\nabla \psi|^{2}\cdot\mathrm{e}^{-f}\mathrm{d}v \quad \text { for } \psi \in C_{0}^{\infty}(M).
$$

Next, we establish and recall some Lemmas to be used latter. By a smart application of an elementary inequality
\begin{eqnarray}\label{elem inequ}
 (a+b)^2\leq\frac{a^2}{1+\alpha}-\frac{b^2}{\alpha},~\forall\alpha>0,
\end{eqnarray}
Li obtain a Bochner type inequality for $f$-harmonic functions (Lemma 2.1 in \cite{Li05}). Here we adopt Li's idea \cite{Li05} to get the following Bochner type inequality for $L_f^2$ harmonic 1-forms, which will play a key role in this paper,
\begin{lem}
 Let $\omega$ be an $L_f^2$ harmonic 1-form on an $n$-dimensional complete smooth metric measure space $(M,g,\mathrm{e}^{-f}\mathrm{d}v)$ and $m\geq n$ be any constant. Then
 \begin{eqnarray}\label{bochner inequality}
  |\omega|\Delta_f|\omega|\geq\frac{|\nabla|\omega||^2}{m-1}+\mathrm{Ric}_{m,n}(\omega,\omega).
 \end{eqnarray}
 Equality holds iff
 $$
  \left(\omega_{i,j}\right)=\left(\begin{array}{ccccc}{-(m-1) \mu} & {0} & {0} & {\ldots} & {0} \\
  {0} & {\mu} & {0} & {\ldots} & {0} \\ {0} & {0} & {\mu} & {\ldots} & {0} \\
  {\vdots} &  {\vdots} & {\vdots} & {\ddots} & {\vdots} \\
  {0} &  {0} & {0} & {\ldots} & {\mu}\end{array}\right),
 $$
 where $\mu=\dfrac{\langle\nabla f,\omega\rangle}{n-m}$.
\end{lem}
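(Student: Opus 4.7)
The plan is to combine a weighted Bochner formula with a refined Kato-type inequality, following Li's strategy in \cite{Li05} but carried out for 1-forms instead of functions. Since $\omega$ is $\Delta_f$-harmonic, I would first establish the weighted Weitzenb\"ock identity
$$
 \tfrac{1}{2}\Delta_f|\omega|^2 = |\nabla\omega|^2 + \langle\omega,\Delta_f\omega\rangle + \mathrm{Ric}_f(\omega,\omega),
$$
which can be obtained by inserting the usual Bochner formula for 1-forms into the definition $\Delta_f = \Delta - \langle\nabla f,\nabla\cdot\rangle$ and using that $\delta_f=\delta+\iota_{\nabla f}$. Since $\Delta_f\omega=0$ and $\tfrac{1}{2}\Delta_f|\omega|^2=|\omega|\Delta_f|\omega|+|\nabla|\omega||^2$ at points where $|\omega|>0$, this reduces to
$$
 |\omega|\Delta_f|\omega| = |\nabla\omega|^2 - |\nabla|\omega||^2 + \mathrm{Ric}_f(\omega,\omega).
$$
Using the definition $\mathrm{Ric}_{m,n}=\mathrm{Ric}_f-\tfrac{\nabla f\otimes\nabla f}{m-n}$, the target inequality \eqref{bochner inequality} is then equivalent to the refined Kato bound
$$
 |\nabla\omega|^2 - |\nabla|\omega||^2 \geq \tfrac{1}{m-1}|\nabla|\omega||^2 - \tfrac{\langle\nabla f,\omega\rangle^2}{m-n}.
$$

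The heart of the proof is this pointwise Kato estimate. At a point where $\omega\neq 0$, I would pick an orthonormal frame $\{e_i\}$ with $e_1 = \omega/|\omega|$, so that $\omega_1=|\omega|$ and $\omega_j=0$ for $j>1$. The conditions $d\omega=0$ and $\delta_f\omega=0$ become the symmetry $\omega_{i,j}=\omega_{j,i}$ and the weighted trace identity $\sum_i \omega_{i,i}=\langle\nabla f,\omega\rangle=:\sigma$. Expanding $|\nabla\omega|^2$ and $|\nabla|\omega||^2=\sum_i\omega_{1,i}^2$ in this frame and discarding the obviously non-negative contributions from $\omega_{1,j}$ ($j>1$) and $\omega_{i,j}$ ($1<i<j$), the estimate reduces to
$$
 \sum_{j>1}\omega_{j,j}^2 \geq \tfrac{\omega_{1,1}^2}{m-1} - \tfrac{\sigma^2}{m-n}.
$$
Applying Cauchy--Schwarz to get $\sum_{j>1}\omega_{j,j}^2 \geq \tfrac{(\sigma-\omega_{1,1})^2}{n-1}$ and then the elementary fraction inequality $\tfrac{x^2}{p}+\tfrac{y^2}{q}\geq \tfrac{(x+y)^2}{p+q}$ with $x=\omega_{1,1}-\sigma$, $y=\sigma$, $p=n-1$, $q=m-n$ produces exactly $\tfrac{\omega_{1,1}^2}{m-1}$ on the right side and closes the argument.

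For the equality case, I would trace back where each inequality was used. Equality in the Kato step forces $\omega_{1,j}=0$ for $j>1$ and $\omega_{i,j}=0$ for $1<i<j$, so $\nabla\omega$ is diagonal in the chosen frame. Equality in Cauchy--Schwarz on $\sum_{j>1}\omega_{j,j}^2$ forces all diagonal entries $\omega_{j,j}$, $j>1$, to coincide with a common value $\mu$. Finally, equality in the fraction inequality requires the proportionality $(\omega_{1,1}-\sigma)(m-n)=\sigma(n-1)$, which after combining with the trace identity $\omega_{1,1}+(n-1)\mu=\sigma$ gives $\mu=\sigma/(n-m)$ and $\omega_{1,1}=-(m-1)\mu$, precisely the matrix in the statement.

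The main obstacle is organizational rather than conceptual: one must identify the correct splitting $x=\omega_{1,1}-\sigma$, $y=\sigma$ so that the weights $p,q$ sum to $m-1$, which is what makes the refined Kato constant come out to $\tfrac{1}{m-1}$ rather than the classical $\tfrac{1}{n-1}$; then one has to be careful about signs and about consistently using the weighted codifferential $\delta_f$ when translating the harmonicity hypothesis into a trace condition involving $\nabla f$.
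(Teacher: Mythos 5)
Your proposal is correct and follows essentially the same route as the paper: the same weighted Weitzenb\"ock identity, the same frame with $e_1=\omega/|\omega|$, Cauchy--Schwarz on the trace of the $(n-1)\times(n-1)$ block, and a two-term elementary inequality calibrated so the weights sum to $m-1$; your form $\tfrac{x^2}{p}+\tfrac{y^2}{q}\geq\tfrac{(x+y)^2}{p+q}$ is exactly the paper's inequality (\ref{elem inequ}) with $\alpha=\tfrac{m-n}{n-1}$ (whose sign is misprinted in the paper), and your equality analysis recovers the same matrix. No gaps.
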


\begin{proof}
 By the weighted Bochner's formula (Equation 2.10 in \cite{Lott03} and Lemma 3.1 in \cite{Vie13}),
 \begin{eqnarray*}
  \frac{1}{2}\Delta_f |\omega|^2
  = |\nabla \omega|^2+\langle \Delta_{f}\omega,\omega \rangle+\mathrm{Ric}_{m,n}(\omega,\omega)+\frac{\nabla f\otimes\nabla f}{m-n}(\omega,\omega).
 \end{eqnarray*}

 It also holds
 $
  \frac{1}{2}\Delta_f|\omega|^2=|\omega|\Delta_f |\omega|+|\nabla |\omega||^2.
 $
 Hence, if $\Delta_f \omega=0$, which by Lemma 2.2 in \cite{Vie13}, is equivalent to that
 \begin{eqnarray}
  \left\{
  \begin{array}{ccc}
    \displaystyle{\omega_{i,j}=\omega_{j,i},~~i,j=1,\cdots,n}\\
    \displaystyle{\sum_{i=1}^n \omega_{i,i}=\langle \nabla f,\omega\rangle},
  \end{array}\label{harmonic form}
  \right.
 \end{eqnarray}
 we have
 $$
  |\omega|\Delta_f|\omega|=|\nabla \omega|^2-|\nabla|\omega||^2+\mathrm{Ric}_{m,n}(\omega,\omega)+\frac{\nabla f\otimes\nabla f}{m-n}(\omega,\omega).
 $$

 Choose an appropriate local frame such that $\displaystyle{\omega_1=\frac{\omega}{|\omega|}}$, then
 $
  |\nabla|\omega||^2=\sum_{j=1}^n \omega_{1,j}^2.
 $
 Since
 \begin{eqnarray}\label{equ1}
  |\nabla\omega|^2 &=& \omega_{1,1}+\sum_{j=2}^n \omega_{1,j}^2+\sum_{j=2}\omega_{j,1}^2+\sum_{i=2}^n\omega_{i,i}^2+\sum_{i,j=2,i\not=j}^n \omega_{ij}^2 \nonumber\\
  &\geq& \omega_{1,1}^2+2\sum_{j=2}^n \omega_{1,j}^2+\frac{1}{n-1}(\sum_{i=2}^n\omega_{i,i})^2\\
  &=& \omega_{1,1}^2+2\sum_{j=2}^n \omega_{1,j}^2+\frac{1}{n-1}(-\omega_{1,1}+\langle\nabla f,\omega\rangle)^2, \nonumber
 \end{eqnarray}
 by (\ref{elem inequ}), we have for any positive number $\alpha$,
 \begin{eqnarray}
  |\nabla\omega|^2-|\nabla|\omega||^2 &=& \sum_{j=2}^n \omega_{1,j}^2+\frac{1}{n-1}(-\omega_{1,1}+\langle\nabla f,\omega\rangle)^2 \nonumber\\
  &\geq& \sum_{j=2}^n \omega_{1,j}^2+\frac{1}{n-1}(\frac{\omega_{1,1}^2}{1+\alpha}-\frac{\langle\nabla f,\omega\rangle^2}{\alpha}) \label{equ2}\\
  &\geq&  \frac{1}{(1+\alpha)(n-1)}\sum_{j=1}^n\omega_{1,j}^2-\frac{1}{\alpha(n-1)}\langle\nabla f,\omega\rangle^2.\label{equ3}
 \end{eqnarray}
 Hence, if we choose $\displaystyle{\alpha=\frac{m-n}{n-1}}$,
 \begin{eqnarray*}
  && |\omega|\Delta_f|\omega|\\
  &\geq& \frac{1}{(1+\alpha)(n-1)}\sum_{j=1}^n\omega_{1,j}^2+\mathrm{Ric}_{m,n}(\omega,\omega)+\frac{\langle\nabla f,\omega\rangle^2}{m-n}-\frac{\langle\nabla f,\omega\rangle^2}{\alpha(n-1)}\\
  &=& \frac{|\nabla|\omega||^2}{m-1}+\mathrm{Ric}_{m,n}(\omega,\omega).
 \end{eqnarray*}
 In addition, equality in (\ref{bochner inequality}) holds if and only if equality in (\ref{equ1}),(\ref{equ2}),(\ref{equ3}) holds simutanously. Then "=" in (\ref{equ1}) implies $\omega_{i,j}=0$ for $2\leq i\not=j\leq n$ and $\omega_{2,2}=\omega_{3,3}=\cdots=\omega_{n,n}$; "=" in (\ref{equ2}) implies $\omega_{1,1}=\dfrac{m-1}{m-n}\langle \nabla f,\omega\rangle$; "=" in (\ref{equ3}) implies $\omega_{1,j}=0$ for $j=2,\cdots,n$. Hence, by (\ref{harmonic form}) and let $\mu=\omega_{2,2}=\dfrac{\langle\nabla f,\omega\rangle}{n-m}$, we finish the proof.
\end{proof}

In the proofs of the sequel, we will always use the following cut-off function,
  \begin{eqnarray}\label{cutoff}
  \phi=\left\{
  \begin{array}{ccc}
    &1,&~~~~\mathrm{on}~B(R),\\
    &0,&~~~~\mathrm{on}~M\backslash B(2R),
  \end{array}
  \right.
 \end{eqnarray}
 such that $\displaystyle{|\nabla\phi|^2\leq \frac{C}{R^2}}$ on $B(2R) \backslash B(R)$.

 The following lemma is a weighted version of the corresponding lemma in \cite{Lam10}, which can be found in \cite{DS13},
\begin{lem}\label{Di}
 Let $h$ be a nonnegative function satisfying the differential inequality
  $$h\Delta_f h\geq -ah^2+b|\nabla h|^2,$$
 in the weak sense, where $a,b$ are constants and $b>-1$.
 For any $\varepsilon>0$, we have the estimate
  \begin{eqnarray*}
   &&[b(1-\varepsilon)+1] \int_{M}|\nabla(\phi h)|^{2} \cdot\mathrm{e}^{-f}\mathrm{d}v\\
   &\leq& \left(b\left(\frac{1}{\varepsilon}-1\right)+1\right) \int_{M} h^{2}|\nabla \phi|^{2} \cdot\mathrm{e}^{-f}\mathrm{d}v+ a \int_{M} \phi^{2} h^{2} \cdot\mathrm{e}^{-f}\mathrm{d}v,
  \end{eqnarray*}
 for any compactly supported smooth function $\phi\in C_0^{\infty}(M)$. In addition, if
  $$
   \int_{B_p(R)} h^2 \cdot\mathrm{e}^{-f}\mathrm{d}v=o(R^2),
  $$
 then
  $$
   \int_{M} |\nabla h|^2 \cdot\mathrm{e}^{-f}\mathrm{d}v \leq \frac{a}{b+1}\int_M h^2\cdot\mathrm{e}^{-f}\mathrm{d}v.
  $$
 In particular, $h$ has a finite $f$-Dirichlet integral if $h\in L^2_f(M)$.
\end{lem}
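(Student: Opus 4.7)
The plan is to multiply the differential inequality $h\Delta_f h \geq -ah^2 + b|\nabla h|^2$ by the nonnegative test function $\phi^2$, integrate against $\mathrm{e}^{-f}\mathrm{d}v$, and apply the weighted integration-by-parts identity $\int_M u\Delta_f v\cdot\mathrm{e}^{-f}\mathrm{d}v = -\int_M\langle\nabla u,\nabla v\rangle\,\mathrm{e}^{-f}\mathrm{d}v$ (valid because $\phi$ is compactly supported) with $u=\phi^2 h$ and $v=h$. Expanding $\nabla(\phi^2 h)=2\phi h\,\nabla\phi+\phi^2\nabla h$ and rearranging produces the basic Caccioppoli-type estimate
$$(b+1)\int_M \phi^2|\nabla h|^2\,\mathrm{e}^{-f}\mathrm{d}v \;\leq\; a\int_M\phi^2 h^2\,\mathrm{e}^{-f}\mathrm{d}v \;-\; 2\int_M \phi h\,\langle\nabla\phi,\nabla h\rangle\,\mathrm{e}^{-f}\mathrm{d}v,$$
which is meaningful thanks to the hypothesis $b+1>0$.

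Next I would convert $\phi^2|\nabla h|^2$ into the target quantity $|\nabla(\phi h)|^2$ via the identity $\phi^2|\nabla h|^2=|\nabla(\phi h)|^2-2\phi h\langle\nabla\phi,\nabla h\rangle-h^2|\nabla\phi|^2$, and rewrite the remaining cross term using $\phi\nabla h=\nabla(\phi h)-h\nabla\phi$, which gives $\phi h\langle\nabla\phi,\nabla h\rangle = h\langle\nabla\phi,\nabla(\phi h)\rangle - h^2|\nabla\phi|^2$. This bookkeeping collapses the inequality to
$$(b+1)\int_M|\nabla(\phi h)|^2\,\mathrm{e}^{-f}\mathrm{d}v \;\leq\; a\int_M \phi^2 h^2\,\mathrm{e}^{-f}\mathrm{d}v + 2b\int_M h\langle\nabla\phi,\nabla(\phi h)\rangle\,\mathrm{e}^{-f}\mathrm{d}v + (1-b)\int_M h^2|\nabla\phi|^2\,\mathrm{e}^{-f}\mathrm{d}v.$$
Applying Young's inequality $2|h\langle\nabla\phi,\nabla(\phi h)\rangle|\leq\varepsilon|\nabla(\phi h)|^2+\varepsilon^{-1}h^2|\nabla\phi|^2$ together with the prefactor $b$, absorbing the $b\varepsilon|\nabla(\phi h)|^2$ piece into the left-hand side, and combining coefficients of $h^2|\nabla\phi|^2$ then yields precisely the advertised inequality
$$[b(1-\varepsilon)+1]\int_M|\nabla(\phi h)|^2\,\mathrm{e}^{-f}\mathrm{d}v \;\leq\; \bigl[b\bigl(\tfrac{1}{\varepsilon}-1\bigr)+1\bigr]\int_M h^2|\nabla\phi|^2\,\mathrm{e}^{-f}\mathrm{d}v + a\int_M \phi^2 h^2\,\mathrm{e}^{-f}\mathrm{d}v.$$

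For the second assertion I would plug in the cutoff $\phi$ from (\ref{cutoff}), which is $1$ on $B(R)$ with $|\nabla\phi|^2\leq C/R^2$ on $B(2R)\setminus B(R)$. The first term on the right is then controlled by $CR^{-2}\int_{B(2R)}h^2\,\mathrm{e}^{-f}\mathrm{d}v$, which tends to $0$ as $R\to\infty$ by the growth hypothesis $\int_{B_p(R)}h^2\,\mathrm{e}^{-f}\mathrm{d}v=o(R^2)$. Fatou's lemma lets me pass to the limit and obtain $[b(1-\varepsilon)+1]\int_M|\nabla h|^2\,\mathrm{e}^{-f}\mathrm{d}v\leq a\int_M h^2\,\mathrm{e}^{-f}\mathrm{d}v$, and then sending $\varepsilon\to 0^+$ delivers the clean bound $\int_M|\nabla h|^2\,\mathrm{e}^{-f}\mathrm{d}v\leq\frac{a}{b+1}\int_M h^2\,\mathrm{e}^{-f}\mathrm{d}v$. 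The final \emph{in particular} statement is then immediate.

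The main subtlety I expect is keeping the sign of $b$ straight when applying Young's inequality: one must use the absolute-value form so that the coefficient $b(1-\varepsilon)+1$ of $|\nabla(\phi h)|^2$ on the left stays strictly positive for small $\varepsilon$, and this is precisely where the hypothesis $b>-1$ is consumed (so that $b+1>0$ survives after absorbing the $b\varepsilon$ term). A secondary and entirely standard issue is that the differential inequality may only hold weakly on the set $\{h=0\}$ (as happens for $h=|\omega|$ in the application to Lemma 2.2); the usual remedy is to replace $h$ by $\sqrt{h^2+\delta}$, carry out the integration by parts for this smooth function, and then send $\delta\to 0^+$, which legitimizes every step above.
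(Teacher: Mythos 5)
Your argument is correct and is exactly the standard Caccioppoli-plus-Young's-inequality proof of this estimate; the paper itself gives no proof, merely citing \cite{Lam10} and \cite{DS13}, where the argument is the same as yours. The only point worth noting is that your Young's-inequality step reproduces the stated coefficients $b(1-\varepsilon)+1$ and $b(\tfrac{1}{\varepsilon}-1)+1$ verbatim when $b\geq 0$ (the only case the paper uses, with $b=\tfrac{1}{m-1}$); for $-1<b<0$ the absolute-value form of Young's inequality produces $|b|$ in place of $b$ in those coefficients, a discrepancy inherited from the lemma as stated in the literature rather than a flaw in your proof, and it does not affect the limiting conclusion $\int_M|\nabla h|^2\,\mathrm{e}^{-f}\mathrm{d}v\leq\frac{a}{b+1}\int_M h^2\,\mathrm{e}^{-f}\mathrm{d}v$.
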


We will also need the following result,
\begin{lem}\label{convergence}
 For an $L^2_f$ integrable function $h$ on $(M,g,\mathrm{e}^{-f}\mathrm{d}v)$ satisfying the differential inequality
  $$h\Delta_f h\geq -ah^2+b|\nabla h|^2,$$ we have
 \begin{eqnarray}
 &&\lim_{R\rightarrow\infty} \int_M \phi h\langle \nabla\phi,\nabla h \rangle\cdot\mathrm{e}^{-f}\mathrm{d}v=0,\label{int lim2}\\
 &&\lim_{R\rightarrow\infty} \int_M |\nabla(\phi h)|^2\cdot\mathrm{e}^{-f}\mathrm{d}v=\int_M  |\nabla h|^2\cdot\mathrm{e}^{-f}\mathrm{d}v.\label{int lim1}
 \end{eqnarray}
 Moreover,
 \begin{eqnarray}
 && \lambda_1(\Delta_f)\int_M  h^2\cdot\mathrm{e}^{-f}\mathrm{d}v\leq \int_M  |\nabla h|^2\cdot\mathrm{e}^{-f}\mathrm{d}v.\label{int lim3}
 \end{eqnarray}
\end{lem}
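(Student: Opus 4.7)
The plan is to prove (\ref{int lim2}), (\ref{int lim1}), and (\ref{int lim3}) in that order, since each relies on the previous one and the hypothesis together with Lemma \ref{Di} will provide the crucial bound $\int_M |\nabla h|^2 e^{-f} dv < \infty$.

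For (\ref{int lim2}), I would apply Cauchy--Schwarz to write
$$
\left|\int_M \phi h \langle\nabla\phi,\nabla h\rangle e^{-f}dv\right|^2 \leq \left(\int_M h^2 |\nabla\phi|^2 e^{-f}dv\right)\left(\int_M \phi^2 |\nabla h|^2 e^{-f}dv\right).
$$
Because $\phi$ is supported in $B(2R)$ with $|\nabla\phi|^2 \leq C/R^2$ on the annulus $B(2R)\setminus B(R)$, the first factor is bounded by $\tfrac{C}{R^2}\int_{B(2R)\setminus B(R)} h^2 e^{-f}dv \to 0$ as $R\to\infty$, using $h \in L^2_f(M)$. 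The second factor is uniformly bounded by $\int_M |\nabla h|^2 e^{-f}dv$, which is finite by Lemma \ref{Di} applied to $h \in L^2_f(M)$.

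For (\ref{int lim1}), I would simply expand
$$
|\nabla(\phi h)|^2 = \phi^2|\nabla h|^2 + 2\phi h\langle\nabla\phi,\nabla h\rangle + h^2|\nabla\phi|^2,
$$
and pass to the limit termwise: the first integral converges to $\int_M |\nabla h|^2 e^{-f}dv$ by dominated convergence (the integrand is dominated by $|\nabla h|^2 \in L^1_f$ and $\phi^2\to 1$ pointwise); the middle integral vanishes by (\ref{int lim2}); the last integral vanishes by the same annular estimate used above.

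For (\ref{int lim3}), since $\phi h$ is compactly supported, the variational characterization of $\lambda_1(\Delta_f)$ yields
$$
\lambda_1(\Delta_f)\int_M \phi^2 h^2 e^{-f}dv \leq \int_M |\nabla(\phi h)|^2 e^{-f}dv.
$$
Sending $R\to\infty$, the left-hand side converges to $\lambda_1(\Delta_f)\int_M h^2 e^{-f}dv$ by monotone convergence, while the right-hand side converges to $\int_M |\nabla h|^2 e^{-f}dv$ by (\ref{int lim1}), giving the claim. The only mild obstacle is regularity: the variational principle formally requires smooth compactly supported test functions, but a standard density/mollification argument (combined with the Lipschitz regularity of $h=|\omega|$ in the intended application) justifies substituting $\phi h$, so this step is routine once (\ref{int lim1}) is in hand.
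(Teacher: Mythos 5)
Your proposal is correct and follows essentially the same route as the paper: finiteness of $\int_M |\nabla h|^2\,\mathrm{e}^{-f}\mathrm{d}v$ from Lemma \ref{Di}, the expansion of $|\nabla(\phi h)|^2$ with Cauchy--Schwarz on the cross term and the $|\nabla\phi|^2\leq C/R^2$ bound to kill the error terms, and the variational principle applied to $\phi h$ followed by $R\to\infty$. The only differences are cosmetic (you prove \eqref{int lim2} first and then feed it into \eqref{int lim1}, while the paper derives both from one combined estimate) plus your explicit remark on the admissibility of the Lipschitz test function $\phi h$, which the paper leaves implicit.
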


\begin{proof}
 By Lemma \ref{Di}, we have
 $$\int_M |\nabla h |^2\cdot\mathrm{e}^{-f}\mathrm{d}v<\infty.$$
 In addition,
 \begin{eqnarray*}
  && \int_M |\nabla(\phi h)|^2\cdot\mathrm{e}^{-f}\mathrm{d}v\\
  &=& \int_M h^2|\nabla\phi |^2\cdot\mathrm{e}^{-f}\mathrm{d}v + \int_M \phi^2|\nabla h |^2\cdot\mathrm{e}^{-f}\mathrm{d}v + 2\int_M \langle h\nabla\phi,\phi\nabla h\rangle\cdot\mathrm{e}^{-f}\mathrm{d}v,
 \end{eqnarray*}
 where
 \begin{eqnarray*}
  && \left|\int_M h^2|\nabla\phi |^2\cdot\mathrm{e}^{-f}\mathrm{d}v + 2\int_M \langle h\nabla\phi,\phi\nabla h\rangle\cdot\mathrm{e}^{-f}\mathrm{d}v\right|\\
  &\leq& \frac{C}{R^2}\int_M h^2\cdot\mathrm{e}^{-f}\mathrm{d}v
  + 2\left(\int_M h^2|\nabla \phi|^2\cdot\mathrm{e}^{-f}\mathrm{d}v\right)^{\frac{1}{2}}\left(\int_M \phi^2|\nabla h |^2\cdot\mathrm{e}^{-f}\mathrm{d}v\right)^{\frac{1}{2}}.
 \end{eqnarray*}
 Hence, letting $R\rightarrow \infty$, one gets
 $$
  \lim_{R\rightarrow\infty} \int_M |\nabla(\phi h)|^2\cdot\mathrm{e}^{-f}\mathrm{d}v=\int_M  |\nabla h|^2\cdot\mathrm{e}^{-f}\mathrm{d}v.
 $$
 From the proof, we see (\ref{int lim2}) holds.
 By variational principle,
 $$
  \lambda_1(\Delta_f)\int_M  (\phi h)^2\cdot\mathrm{e}^{-f}\mathrm{d}v\leq \int_M  |\nabla(\phi h)|^2\cdot\mathrm{e}^{-f}\mathrm{d}v,
 $$
 and let $R\rightarrow\infty$, we obtain (\ref{int lim3}).
\end{proof}

\section{Vanishing and splitting results}

\begin{proof}(proof of Theorem \ref{main1})
 We adopt Vieira's idea in \cite{Vie16} for the proof. Suppose $\mathcal{H}^1(L_f^2)$ is non-trivial, and for any non-trivial $L_f^2$ harmonic 1-form $\omega$, let $h=|\omega|$. Then by (\ref{bochner inequality}) and $\mathrm{Ric}_{m,n}\geq -a\lambda_1(\Delta_f)$, we have
 \begin{eqnarray}\label{bochner func}
  h\Delta_f h\geq\frac{|\nabla h|^2}{m-1}-a\lambda_1(\Delta_f)h^2.
 \end{eqnarray}
 Multiple the cut-off function $\phi^2$ on both sides of (\ref{bochner func}) and by integration by parts, we get
 \begin{eqnarray*}
  &&\frac{1}{m-1}\int_M \phi^2|\nabla h|^2-a\lambda_1(\Delta_f)\int_M \phi^2h^2\\
  &\leq& -\int_M \phi^2|\nabla h|^2 - 2\int_M \phi h\langle \nabla\phi,\nabla h\rangle,
 \end{eqnarray*}
 so
 \begin{eqnarray*}
  &&\frac{m}{m-1}\int_M \phi^2|\nabla h|^2\\
  &\leq& a\lambda_1(\Delta_f)\int_M \phi^2h^2 - 2\int_M \phi h\langle \nabla\phi,\nabla h\rangle\\
  &\leq& a\int_M |\nabla(\phi h)|^2-2\int_M \phi h\langle \nabla\phi,\nabla h\rangle.
 \end{eqnarray*}
 Hence, by Lemma \ref{convergence}, letting $R\rightarrow\infty$, we obtain
 $$
  (\frac{m}{m-1}-a)\int_M |\nabla h|^2\leq 0.
 $$
 Since $a<\frac{m}{m-1}$, $h$ must be a constant. Then the weighted volume
 $$\mathrm{vol}_f(M)=\frac{\int_M h^2}{h^2}<\infty.$$

 If $\lambda_1(\Delta_f)>0$,
 $$
 \lambda_1(\Delta_f)\int \phi^2\leq \int_M |\nabla\phi|^2\leq \frac{C}{R^2}\mathrm{vol}_f(M)\rightarrow 0,~\mathrm{as}~R\rightarrow\infty.
 $$
 This forces $\phi\equiv 0$, which contradicts with the choice of $\phi$. Hence, $\lambda_1(\Delta_f)=0$. Then the curvature condition becomes $\mathrm{Ric}_{m,n}\geq 0$, and by Theorem 9.1 in \cite{Li05}, which says that the weighted volume of a complete non-compact smooth metric measure space with non-negative $\mathrm{Ric}_{m,n}$ is infinite, we get a contradiction. Hence, the space $\mathcal{H}^1(L_f^2(M))$ has to be trivial.
\end{proof}

From the proof, we see that by using similar arguments, Theorem 3 in \cite{Vie16}, which says the space of $L^2$ harmonic $1$-forms is trivial on complete $n$-dimensional non-compact Riemannian manifolds satisfying a weighted Poincar\'e inequality with weight function $\rho$ and $\mathrm{Ric}\geq a\rho$ for $0<a<\frac{n}{n-1}$ can also be generalized to metric measure space. 

\begin{proof} (proof of Theorem \ref{main2})
 If $H^1(L^2_f(M))=0$, it's done. Otherwise, let $\omega$ be a non-trivial $L_f$ harmonic 1-form, and let $h=|\omega|$. Then $h$ is $L_f^2$-integrable. By inequality (\ref{bochner inequality}) and the assumption on $\mathrm{Ric}_{m,n}$, we have
 \begin{eqnarray}\label{bih}
  h\Delta_f h\geq -\frac{m\lambda_1(\Delta_f)}{m-1}h^2+\frac{1}{m-1}|\nabla h|^2.
 \end{eqnarray}

By Lemma \ref{convergence},
 \begin{eqnarray}\label{eigen inequ}
  \lambda_1(\Delta_f)\int_M  h^2 \cdot \mathrm{e}^{-f}\mathrm{d}v\leq\int_M  |\nabla h|^2 \cdot \mathrm{e}^{-f}\mathrm{d}v.
 \end{eqnarray}

 Similar to the proof of Theorem 2.2 in \cite{DS14}, if $"<"$ holds in (\ref{eigen inequ}), we get a contradiction. If "=" holds in (\ref{eigen inequ}),
equality of (\ref{bih}) holds, and this forces equality holds in (\ref{bochner inequality}), so by Lemma \ref{bochner inequality},
 $$
  \left(w_{i,j}\right)=\left(\begin{array}{ccccc}{-(m-1) \mu} & {0} & {0} & {\ldots} & {0} \\ {0} & {\mu} & {0} & {\ldots} & {0} \\ {0} & {0} & {\mu} & {\ldots} & {0} \\ {\vdots} & {\vdots} & {\vdots} & {\ddots} & {\vdots} \\ {0} & {0} & {0} & {\ldots} & {\mu}\end{array}\right).
 $$
 The splitting argument is the same as that of Li and Wang \cite{LW06}, page 946, or Dung and Sung \cite{DS14}, page 1788, so we omit here.
\end{proof}

If the $m$-Bakry-\'{E}mery Ricci curvature condition is further relaxed, we will need an extra condition on $\lambda_1(\Delta_f)$,
\begin{thm}\label{m3}
 Let $(M,g,\mathrm{e}^{-f}\mathrm{d}v)$ be a complete non-compact smooth metric measure space of dimension $n \geq 3$. Suppose that $\lambda_1(\Delta_f)\geq \dfrac{b}{\frac{m}{m-1}-a}$ and
  $$\mathrm{Ric}_{m,n}\geq -a\lambda_1(\Delta_f)-b,$$
  where $0<a<\frac{m}{m-1}$ and $b>0$.
 Then, either

 $(1)$. $H^1(L^2_f(M))=\{0\}$; or

 $(2)$. $\widetilde{M}=\mathbb{R}\times N$, where $\widetilde{M}$ is the universal cover of $M$ and $N$ is a manifold of dimension $n-1$.
\end{thm}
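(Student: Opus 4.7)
The plan is to mimic the structure of the proof of Theorem \ref{main2}, replacing the tight Bochner bound by the relaxed one that keeps track of the extra $-b$ term, and then use the hypothesis on $\lambda_1(\Delta_f)$ to force all inequalities to be equalities.

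First, assume $H^1(L^2_f(M))\neq\{0\}$ and pick a non-trivial $\omega\in H^1(L^2_f(M))$; set $h=|\omega|$. Combining the Bochner-type inequality (\ref{bochner inequality}) with the curvature hypothesis $\mathrm{Ric}_{m,n}\geq -a\lambda_1(\Delta_f)-b$ yields
\begin{equation*}
h\Delta_f h\;\geq\;\frac{|\nabla h|^2}{m-1}\;-\;\bigl(a\lambda_1(\Delta_f)+b\bigr)h^2,
\end{equation*}
which is exactly the hypothesis of Lemma \ref{Di} with constants $\tilde a=a\lambda_1(\Delta_f)+b$ and $\tilde b=\tfrac{1}{m-1}>-1$. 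Since $h\in L^2_f(M)$, Lemma \ref{Di} already gives $|\nabla h|\in L^2_f(M)$.

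Next I multiply the inequality by the cut-off $\phi^2$ from (\ref{cutoff}), integrate against $\mathrm{e}^{-f}\mathrm{d}v$, and integrate by parts to get
\begin{equation*}
\frac{m}{m-1}\int_M \phi^2|\nabla h|^2\,\mathrm{e}^{-f}\mathrm{d}v\;\leq\;\bigl(a\lambda_1(\Delta_f)+b\bigr)\int_M\phi^2 h^2\,\mathrm{e}^{-f}\mathrm{d}v\;-\;2\int_M \phi h\,\langle\nabla\phi,\nabla h\rangle\,\mathrm{e}^{-f}\mathrm{d}v.
\end{equation*}
Letting $R\to\infty$ and invoking (\ref{int lim2})--(\ref{int lim3}) of Lemma \ref{convergence} produces
\begin{equation*}
\frac{m}{m-1}\int_M|\nabla h|^2\,\mathrm{e}^{-f}\mathrm{d}v\;\leq\;\bigl(a\lambda_1(\Delta_f)+b\bigr)\int_M h^2\,\mathrm{e}^{-f}\mathrm{d}v,
\end{equation*}
and then the Poincar\'e inequality $\lambda_1(\Delta_f)\int_M h^2\,\mathrm{e}^{-f}\mathrm{d}v\leq \int_M|\nabla h|^2\,\mathrm{e}^{-f}\mathrm{d}v$ yields
\begin{equation*}
\Bigl(\tfrac{m}{m-1}-a\Bigr)\lambda_1(\Delta_f)\int_M h^2\,\mathrm{e}^{-f}\mathrm{d}v\;\leq\;b\int_M h^2\,\mathrm{e}^{-f}\mathrm{d}v.
\end{equation*}
The hypothesis $\lambda_1(\Delta_f)\geq\tfrac{b}{\frac{m}{m-1}-a}$ says the reverse inequality already holds, so every step must be an equality: equality in the Bochner inequality (\ref{bochner inequality}), equality in the curvature bound applied to $\omega$, and equality in the weighted Poincar\'e inequality for $h$.

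Finally I read off the rigidity. Equality in (\ref{bochner inequality}) forces the covariant-derivative matrix $(\omega_{i,j})$ to have the codimension-one eigenvalue structure described in the Lemma. Together with $\Delta_f h=-\lambda_1(\Delta_f)h$ (which follows from equality in the Poincar\'e inequality for $h$ and $h\in L^2_f$), this is precisely the configuration handled in Li--Wang \cite{LW06} and Dung--Sung \cite{DS14}: lifting $\omega$ to the universal cover, integrating the one-form, and applying the de Rham splitting theorem gives $\widetilde{M}=\mathbb{R}\times N$ with $N$ of dimension $n-1$. The main obstacle is bookkeeping in the equality case rather than any new difficulty, since the splitting portion is identical to the arguments already invoked in the proof of Theorem \ref{main2}.
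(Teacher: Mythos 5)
Your proposal is correct and follows essentially the same route as the paper: the relaxed Bochner inequality for $h=|\omega|$, integration against the cut-off $\phi^2$, the limits from Lemma \ref{convergence}, the Poincar\'e inequality to force either $h\equiv 0$ or equality throughout, and then the Li--Wang/Dung--Sung splitting in the equality case. The only cosmetic difference is that you apply the Poincar\'e inequality to $h$ after passing to the limit, whereas the paper applies it to $\phi h$ before the limit; the content is identical.
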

\begin{proof}
 For any $L_f^2$-harmonic 1-form $\omega$, let $h=|\omega|$, so we have
 \begin{eqnarray}\label{bochner func2}
  h\Delta_f h\geq \frac{1}{m-1}|\nabla h|^2-a\lambda_1(\Delta_f) h^2-bh^2.
 \end{eqnarray}
 Multiplying the cut-off function $\phi^2$ on both sides of (\ref{bochner func2}) and by integration by parts, one gets
 \begin{eqnarray*}
 && \frac{1}{m-1}\int_M \phi^2|\nabla h|^2\cdot \mathrm{e}^{-f}\mathrm{d}v-a\lambda_1(\Delta_f)\int_M \phi^2 h^2\cdot \mathrm{e}^{-f}\mathrm{d}v-b\int_M \phi^2h^2\cdot \mathrm{e}^{-f}\mathrm{d}v\\
 &\leq& -\int_M \phi^2|\nabla h|^2\cdot \mathrm{e}^{-f}\mathrm{d}v-2\int_M \langle \phi\nabla h,h\nabla \phi\rangle \cdot \mathrm{e}^{-f}\mathrm{d}v.
 \end{eqnarray*}
 Combining variational principle, one obtains
 \begin{eqnarray*}
  && \frac{m}{m-1}\int_M \phi^2|\nabla h|^2\cdot \mathrm{e}^{-f}\mathrm{d}v\\
  &\leq& a\int_M |\nabla(\phi h)|^2\cdot \mathrm{e}^{-f}\mathrm{d}v+b\int_M \phi^2h^2\cdot \mathrm{e}^{-f}\mathrm{d}v-2\int_M \langle \phi\nabla h,h\nabla \phi\rangle \cdot \mathrm{e}^{-f}\mathrm{d}v.
 \end{eqnarray*}
 Letting $R\rightarrow \infty$, we have
 \begin{eqnarray}\label{int inequal}
  \int_M |\nabla h|^2\cdot \mathrm{e}^{-f}\mathrm{d}v
  \leq \frac{b}{\frac{m}{m-1}-a}\int_M h^2\cdot \mathrm{e}^{-f}\mathrm{d}v.
 \end{eqnarray}
 Suppose $\lambda_1(\Delta_f)>\dfrac{b}{\frac{m}{m-1}-a}$, if $h\not\equiv 0$, then (\ref{int lim3}) and (\ref{int inequal}) implies
 $$\lambda_1(\Delta_f)\leq\frac{b}{\frac{m}{m-1}-a},$$
 which is a contradiction.
 Hence, if $\lambda_1(\Delta_f)>\dfrac{b}{\frac{m}{m-1}-a}$, then $$\mathcal{H}^1(L_f^2(M))=\{ 0 \}.$$

 Suppose $\lambda_1(\Delta_f)=\dfrac{b}{\frac{m}{m-1}-a}$ and $\mathcal{H}^1(L_f^2(M))$ is non-trivial, then equality hold in (\ref{int inequal}). Hence, equality holds in (\ref{bochner func2}), and this forces equality holds in (\ref{bochner inequality}). The rest of the splitting argument is the same as that in the proof of Theorem \ref{main2}.
\end{proof}

From the proof we see that if $\lambda_1(\Delta_f) > \dfrac{b}{\frac{m}{m-1}-a}$, $\mathcal{H}^1(L_f^2(M))$ vanishes, and the splitting case only happens when $\lambda_1(\Delta_f) = \dfrac{b}{\frac{m}{m-1}-a}$.

\vspace{0.5cm}\noindent\textbf{Acknowledgments}.  The author would like to thank Prof. Jiayong Wu for useful suggestions. J.R. Zhou is partially supported by a PRC grant NSFC  11771377 and the Natural Science Foundation of Jiangsu Province(BK20191435).

\end {document}